\newtheorem{theorem}{Theorem}
\newtheorem{proposition}{Proposition}
\newtheorem{lemma}{Lemma}
\theoremstyle{definition}
\newtheorem{definition}{Definition}
\newtheorem{corollary}{Corollary}
\newcommand{\abs}[1]{\left\vert#1\right\vert}
\newcommand{\set}[1]{\left\{#1\right\}}
\newcommand{\med}{\mathop{\vert}}
 \DeclareMathOperator{\pa}{PAut}
\DeclareMathOperator{\dom}{dom} \DeclareMathOperator{\ran}{ran}
 \DeclareMathOperator{\aut}{Aut}
\newcommand{\isd}{\mathcal{IS}_n}
\newcommand{\is}[1]{\mathcal{IS}({#1})}
\newcommand{\nd}{\mathcal{N}_n}
\newcommand{\spx}{S^{PX}}
\newcommand{\pwr}{\mathop{\wr_p}}
\newcommand{\wisd}{S \pwr \isd}
\newcommand{\fa}[1]{(f_{#1}, a_{#1})}
\newcommand{\lc}{\mathbin\mathcal{L}}
\newcommand{\rc}{\mathbin\mathcal{R}}
\newcommand{\hc}{\mathbin\mathcal{H}}
\newcommand{\dc}{\mathbin\mathcal{D}}
\newcommand{\1}{\mathbf{1}}
\newcommand{\0}{\mathbf{0}}
\newcommand{\sbs}{\subset}
\newcommand{\sbeq}{\subseteq}
\newcommand{\ra}{\rightarrow}
\newcommand{\vph}{\varphi}
\newcommand{\s}{\sigma}
\newcommand{\ld}{\{1, \ldots, n\}}
\newcommand{\bv}[1]{\big|_{#1}}
\begin{document}

\title{On cross-sections of partial wreath product of inverse semigroups}%
\author{EUGENIA KOCHUBINSKA}
\address{Taras Shevchenko National University of Kyiv, Faculty of Mechanics and Mathematics, Volodymyrska str. 64, 01601, Kyiv, Ukraine.}

\begin{abstract}
We classify  $\rc$- and $\lc$-cross-sections of partial wreath
product of inverse  semigroups. As a corollary,  we get the
description of $\rc$- and $\lc$-cross-sections of the semigroup $\pa
T$ of partial automorphisms of finite regular rooted tree $T$ and
compute also the number of different $\rc$- ($\lc$-) cross-sections
in this semigroup.
\end{abstract}

\keywords
{Inverse semigroup, partial transformation semigroup,  partial wreath
product, Green's relations, cross-section, rooted tree, partial
automorphism.}

\maketitle

\section{Introduction}

Green's relations are basic relations introduced on a semigroup.
Therefore it is natural that the problem of description of
cross-sections of Green's relations has been arisen. During the last
decade cross-sections of Green's relations for some classical
semigroups were studied by different authors. In particular, all
$\hc$-cross-sections of inverse symmetric semigroup $\isd$ were
studied in \cite{CoReilly}. All $\rc$- and $\lc$-cross-sections were
classified in paper \cite{GM}.

In the present paper we describe all cross-sections of $\rc$ and
$\lc$ Green's relations of partial wreath product of inverse
semigroups. We also count the number of different $\rc$- and
$\lc$-cross-sections of this semigroup. The paper is organized as
follows. In Section 2 we collect all necessary basic definitions and
propositions. In Section 3 we provide a description of all $\rc$-
and $\lc$-cross-sections and compute the number of different $\rc$-
and $\lc$-cross-sections.

\section{Basic definitions}\label{sec:def}
For a set $X$, let $\mathcal{IS}(X)$ denote the set of all partial
bijections on $X$. On the set $\mathcal{IS}(X)$ define a composition
law: $ f \circ g: \dom(g \cap g^{-1} \dom(f))$ $f\circ g =g(f(x)),
\; x \in \dom(g \cap g^{-1}\dom(f))$, where $f, \; g \in
\mathcal{IS}(X)$. Under this operation set $(\mathcal{IS}(X),
\circ)$ forms a semigroup. This semigroup is called the \emph{full
inverse symmetric semigroup} on $X$. If $X=\nd$, where $\nd=\ld$,
then semigroup $\mathcal{IS}(\nd)$ is called the full inverse
symmetric semigroup of rank $n$ and is denoted $\isd$.

It is possible to introduce for elements of $\isd$ an analogue of
cyclic decomposition for elements of symmetric group
$\mathcal{S}_n$. We start with introducing two classes of elements.
Let $A=\{x_1, x_2, \ldots, x_k\} \sbs \nd$ be an ordered subset.
Denote by  $(x_1, x_2, \ldots, x_k)$ the unique element $f\in \isd$
such that $f(x_i)=x_{i+1}$, $i=1,2, \ldots, k-1$, $f(x_k)=x_1$ 
$f(x)=x, x \notin A$. Assume that $A\neq \emptyset$ and denote by
$[x_1, x_2, \ldots, x_k]$ the unique element $f\in \isd$ such that
$f(x_i)=x_{i+1}$, $i=1,2, \ldots, k-1$, $x_k \notin \dom(x)$ 
$f(x)=x$, $x \notin A$. The element $(x_1, x_2, \ldots, x_k)$ is
called a cycle and the element $[x_1, x_2, \ldots, x_k]$ is called a
chain. Any element of $\isd$ decomposes uniquely into the product of
disjoint cycles and chains. This decomposition is called a
\emph{chain decomposition} \cite{GM1}.

Recall the definition of partial wreath product of semigroups. Let
$S$ be a semigroup, $(P,X)$ be a semigroup of partial
transformations of the set $X$. Define the set $\spx$ as a set of
partial functions from $X$ to semigroup $S$:
\begin{equation*}
\spx=\{f:A \ra S| \dom(f)=A, A \sbeq X\}.
\end{equation*}
Given $f,g \in \spx$, the product $fg$ is defined in a following
way:
\begin{equation*}
\dom(fg)=\dom(f)\cap\dom(g), (fg)(x)=f(x)g(x) \text{\ for all } x\in
\dom(fg).
\end{equation*}
For $a\in P, f\in \spx$, define $f^a$ as:
\begin{equation*}
\begin{gathered}
(f^a)(x)=f(xa),\ \dom(f^a)=\{x \in \dom(a); xa\in \dom(f)\}.
\end{gathered}
\end{equation*}

\begin{definition}
\emph{Partial wreath product} of semigroup $S$ with semigroup
$(P,X)$ of partial transformations of the set $X$ is a set
$$\{(f,a)\in \spx \times (P,X)\,|\,\dom(f)=\dom(a) \}$$ with composition defined by
$(f,a)\cdot (g,b)=(fg^a,ab).$ We will denote partial wreath product
of semigroups $S$ and $(P,X)$ by $S \pwr P$.
\end{definition}

It is known \cite{Meldrum} that partial wreath product of semigroups
is a semigroup. Moreover, partial wreath product of inverse
semigroups is an inverse semigroup. An important example of inverse
semigroup is the semigroup $\pa T_n^k$ of partial automorphisms of a
$k$-level $n$-regular rooted tree $T_n^k$. By a partial automorphism
we mean a root-preserving tree homomorphism defined on a connected
subtree  of $T_n^k$. It is shown in \cite{comb} that $$\pa T_n^k
\simeq \underset{k}{\underbrace{\isd \pwr \isd \pwr \cdots \pwr
\isd}}.$$ This is an analogue of the well-known fact that $\aut
T_n^k \simeq \mathcal{S}_n\wr\dots \wr \mathcal{S}_n$.

\section{Description of $\rc$- and $\lc$-cross sections of semigroup $S\pwr \isd$}\label{sec:cross}
In this section we study cross-sections of partial wreath product of
finite inverse semigroup $S$ with semigroup $\isd$. Denote by $\0$
and $\1$ correspondingly the zero and the unit  of the semigroup
$S$.

Recall that Green's $\rc$-relation on inverse semigroup $H$ is
defined by $a\rc b \Leftrightarrow aH^1=bH^1$, similarly Green's
$\lc$-relation is defined by $a\lc b \Leftrightarrow H^1a=H^1b$.
Note that every $\rc$- ($\lc$-) equivalence class contains exactly
one idempotent. It is well-known (see for example \cite{GM1}) that
Green's relations on $\isd$ can be described as follows: $a \rc b
\Leftrightarrow \dom(a)=\dom(b)$; $a \lc b \Leftrightarrow
\ran(a)=\ran(b)$.

$\rc$- and $\lc$-relations on  $\wisd$ are described in the next
proposition.
\begin{proposition} \label{green_2}
\begin{enumerate}
\item $(f,a)$ $\rc$ $(g,b)$ if and only if
$\dom(a)=\dom(b)$ and for any $z \in \dom(a)$ $f(z)\rc g(z)$;

\item $(f,a)\lc(g,b)$ if and only if
$\ran(a)=\ran(b)$ and for any $z \in \ran(a)$ $g^{a^{-1}}(z)\lc
f^{b^{-1}}(z)$, where $a^{-1}$ is an inverse for $a$.
\end{enumerate}
\end{proposition}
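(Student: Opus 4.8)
The plan is to reduce the statement to a short computation with inverses and idempotents. In any inverse semigroup $H$ one has $x\rc y\iff xx^{-1}=yy^{-1}$ and $x\lc y\iff x^{-1}x=y^{-1}y$, and $\wisd$ is an inverse semigroup (a partial wreath product of inverse semigroups, as recalled above). Hence it suffices to find the inverse of $(f,a)$ in $\wisd$, write down the two idempotents $(f,a)(f,a)^{-1}$ and $(f,a)^{-1}(f,a)$ explicitly, and then compare these idempotents for $(f,a)$ and $(g,b)$.

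For the inverse, with $a^{-1}$ denoting the inverse of $a$ in $\isd$, the claim is
\[
(f,a)^{-1}=\bigl(\widetilde f,\,a^{-1}\bigr),\qquad \widetilde f(y)=\bigl(f(ya^{-1})\bigr)^{-1}\ \text{ for }y\in\ran a ;
\]
since $\wisd$ is inverse it is enough to verify $(f,a)(\widetilde f,a^{-1})(f,a)=(f,a)$ and $(\widetilde f,a^{-1})(f,a)(\widetilde f,a^{-1})=(\widetilde f,a^{-1})$, which is a direct calculation from the rules $(f,a)(g,b)=(fg^{a},ab)$, $(g^{a})(x)=g(xa)$, using that $aa^{-1}$ is the identity map on $\dom a$ and $a^{-1}a$ the identity map on $\ran a$. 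The same rules then give
\[
(f,a)(f,a)^{-1}=\bigl(e_{f},\,aa^{-1}\bigr),\qquad (f,a)^{-1}(f,a)=\bigl(e'_{f},\,a^{-1}a\bigr),
\]
where $e_{f}(x)=f(x)f(x)^{-1}$ on $\dom a$ and $e'_{f}(z)=\bigl(f^{a^{-1}}(z)\bigr)^{-1}f^{a^{-1}}(z)$ on $\ran a$; here one uses that $\widetilde f^{\,a}(x)=f(x)^{-1}$ for $x\in\dom a$ and that $f^{a^{-1}}$ is total on $\ran a$ (its domain is $\{z\in\ran a:za^{-1}\in\dom a\}=\ran a$), and one checks that $e_{f}(x),e'_{f}(z)$ are indeed idempotents of $S$.

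Now compare. $(f,a)\rc(g,b)$ holds iff $(f,a)(f,a)^{-1}=(g,b)(g,b)^{-1}$, i.e. iff $aa^{-1}=bb^{-1}$ and $e_{f}=e_{g}$. Two partial identities coincide exactly when their domains do, so the first condition is $\dom a=\dom b$ (equivalently, $a\rc b$ in $\isd$); granting that, $e_{f}$ and $e_{g}$ share the domain $\dom a$, and $e_{f}=e_{g}$ means $f(z)f(z)^{-1}=g(z)g(z)^{-1}$ for every $z\in\dom a$, i.e. $f(z)\rc g(z)$ in $S$. This is (1). Similarly $(f,a)\lc(g,b)$ iff $(f,a)^{-1}(f,a)=(g,b)^{-1}(g,b)$, iff $a^{-1}a=b^{-1}b$ and $e'_{f}=e'_{g}$; the first condition is $\ran a=\ran b$, and then, $f^{a^{-1}}$ and $g^{b^{-1}}$ both being total on $\ran a=\ran b$, the equality $e'_{f}=e'_{g}$ reads $\bigl(f^{a^{-1}}(z)\bigr)^{-1}f^{a^{-1}}(z)=\bigl(g^{b^{-1}}(z)\bigr)^{-1}g^{b^{-1}}(z)$ for all $z\in\ran a$, i.e. $f^{a^{-1}}(z)\lc g^{b^{-1}}(z)$ in $S$, which is (2).

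I expect the real difficulty here to be bookkeeping rather than algebra: every step is a one-line calculation once $(f,a)^{-1}$ is pinned down, but one has to keep careful track of the domains of the partial maps $f^{a}$, $\widetilde f^{\,a}$ and $f^{a^{-1}}$ under both operations $(f,g)\mapsto fg$ and $(f,a)\mapsto f^{a}$ that build $\wisd$, and this asymmetry is exactly why the $\lc$-condition does not look formally dual to the $\rc$-condition. Two degenerate cases make useful checks: with $S$ trivial one recovers $a\rc b\iff\dom a=\dom b$ and $a\lc b\iff\ran a=\ran b$ in $\isd$, and with $\isd$ of rank $1$ one recovers Green's relations on $S$ itself.
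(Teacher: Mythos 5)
Your proof is correct and, unlike the paper, self-contained: the paper's entire ``proof'' of this proposition is a one-line citation of the analogous computation for $\isd \pwr \isd$ in \cite{comb}. Your route --- identify $(f,a)^{-1}=(\widetilde f,a^{-1})$ with $\widetilde f(y)=(f(ya^{-1}))^{-1}$, compute the idempotents $(f,a)(f,a)^{-1}=(e_f,aa^{-1})$ and $(f,a)^{-1}(f,a)=(e'_f,a^{-1}a)$, and invoke the characterization $x\rc y\iff xx^{-1}=yy^{-1}$, $x\lc y\iff x^{-1}x=y^{-1}y$ valid in any inverse semigroup --- is the standard one and is presumably what the cited proof amounts to; the domain bookkeeping you flag ($\widetilde f^{\,a}(x)=f(x)^{-1}$ on $\dom(a)$, and $\dom(f^{a^{-1}})=\ran(a)$) all checks out, and uniqueness of inverses justifies verifying only the two defining identities for $(\widetilde f,a^{-1})$. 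One discrepancy is worth recording: your computation yields the condition $f^{a^{-1}}(z)\lc g^{b^{-1}}(z)$ for $z\in\ran(a)$, whereas the proposition as printed pairs the letters the other way, $g^{a^{-1}}(z)\lc f^{b^{-1}}(z)$. The printed pairing cannot be right as stated, since $\dom(g^{a^{-1}})=\{z\in\ran(a)\,:\,za^{-1}\in\dom(b)\}$ need not be all of $\ran(a)$ when $\dom(a)\neq\dom(b)$; your version is the coherent one, and since $\lc$ is symmetric it agrees with the evidently intended statement obtained by swapping the superscripts. So you have proved the (corrected form of the) proposition; no gap.
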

\begin{proof}
The proof is completely analogous to the one for $\isd \pwr \isd$ in
\cite{comb}.
\end{proof}

Now let $\rho$ be an equivalence relation on a semigroup $H$. A
subsemigroup $T \sbs H$ is called \emph{cross-section with respect
to} $\rho$ provided that $T$ contains exactly one element from every
equivalence class. The cross-sections with respect to  $\rc$-
$(\lc$-) Green's relations are called $\rc$- ($\lc$-)
cross-sections. Note that every $\rc$- ($\lc$-) equivalence class
contains exactly one idempotent. Then the number of elements in
every cross-section is $\abs{E(H)}$, where $E(H)$ is the
subsemigroup of all idempotents of $H$.

It is not difficult to observe that a subsemigroup $H$ of semigroup
$\isd$ is an $\rc$-cross-section if and only if for every
subsemigroup $A\subseteq \nd$ it contains exactly one element $a$
such that $\dom(a)=A$.

Before describing $\rc$- and $\lc$-cross-sections in semigroup
$\wisd$, we recall first the description of $\rc$- and
$\lc$-cross-sections in semigroup $\isd$ presented in \cite{GM1}.
Let now  $\nd=M_1 \sqcup M_2 \ldots \sqcup M_s$~be an arbitrary
decomposition of $\nd=\{1,2, \ldots, n\}$ into disjoint union of
non-empty blocks, where the order of blocks is not important. Assume
that a linear order is fixed on the elements of every block: $M_i =
\set{m_1^i<m_2^i<\dots<m_{\abs{M_i}}^i}$.

For each pair $i,j$ $1 \leq i \leq k$, $1 \leq j \leq |M_i|$ denote
by $a_{i,j}$ the element in $\dc$-class $D_{n-1}$ of rank $n-1$ of
semigroup $\isd$, containing chain $[m_1^i, m_2^i, \ldots, m_j^i]$,
that acts as identity on the set $\nd\setminus \{m_1^i, m_2^i,
\ldots, m_j^i\}$. Denote by $R=R(\overrightarrow{M_1},
\overrightarrow{M_2}, \ldots, \overrightarrow{M_k})$ the semigroup
$\langle a_{i,j} |\; 1 \leq i \leq k, 1\leq j \leq |M_i|\rangle
\sqcup \{e\}$.

\begin{theorem}{\em\cite{GM}}
For an arbitrary decomposition  $\nd=M_1 \sqcup M_2 \ldots \sqcup
M_k$  and arbitrary linear orders on the elements of every block of
this decomposition the semigroup $R(\overrightarrow{M_1},
\overrightarrow{M_2}, \ldots, \overrightarrow{M_k})$ is an
$\rc$-cross-section of $\isd$. Moreover, every $\rc$-cross-section
is of the form  $R(\overrightarrow{M_1}, \overrightarrow{M_2},
\ldots, \overrightarrow{M_s})$ for some decomposition $\nd=M_1
\sqcup M_2 \ldots \sqcup M_s$ and some linear orders on the elements
of every block.
\end{theorem}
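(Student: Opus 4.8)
\emph{Overview.} The plan is to prove the two assertions separately: for the first, to describe the elements of $R(\overrightarrow{M_1},\ldots,\overrightarrow{M_k})$ explicitly and check that exactly one of them has each prescribed domain; for the second, to reconstruct the decomposition and the linear orders from an arbitrary $\rc$-cross-section.

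\emph{$R$ is an $\rc$-cross-section.} One first observes that $a_{i,j}$ is the partial bijection that maps $M_i\setminus\{m_j^i\}$ onto $M_i\setminus\{m_1^i\}$ by the unique order-preserving map and fixes every point outside $M_i$. For $B\sbeq\nd$ let $\psi_B$ be the partial bijection whose restriction to each $M_i$ is the unique order-preserving bijection of $B\cap M_i$ onto the $\abs{B\cap M_i}$ largest elements of $M_i$. I would first establish the multiplication rule $\psi_B\psi_C=\psi_D$, where $D\cap M_i=(\psi_B\big|_{M_i})^{-1}\bigl(C\cap\ran\psi_B\bigr)$: on each block both factors are order-preserving and the set of the $r$ largest elements of $M_i$ is an up-set, so the composite is again order-preserving onto an initial segment from the top, which is precisely the defining property of $\psi_D$. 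In particular $\psi_\nd=e$, $\psi_{\nd\setminus\{m_j^i\}}=a_{i,j}$, and no product of the $a_{i,j}$ has full domain. Next, by induction on $\abs{\nd\setminus B}$, every $\psi_B$ with $B\neq\nd$ is a product of the $a_{i,j}$: choosing $x=m_j^i\in\nd\setminus B$ and putting $B'=(B\setminus M_i)\cup\psi_{\nd\setminus\{x\}}(B\cap M_i)\cup\{m_1^i\}$, one checks $\abs{\nd\setminus B'}=\abs{\nd\setminus B}-1$ and $a_{i,j}\,\psi_{B'}=\psi_B$ from the multiplication rule. Hence $R=\{\psi_B\mid B\sbeq\nd\}$, the map $B\mapsto\dom\psi_B=B$ is a bijection onto the subsets of $\nd$, and by the criterion recalled before the theorem $R$ is an $\rc$-cross-section.

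\emph{Reduction of the converse to a single block.} Let $T$ be an $\rc$-cross-section and $t_A$ its element with $\dom t_A=A$. Since $t_\nd^2$ has domain $\nd$ it equals $t_\nd$, so $t_\nd=e$; more generally $\ran t_A\sbeq A$ forces $t_A$ idempotent. Put $\mu(i)=t_{\{i\}}(i)$. The product $t_{\{i\}}t_{\{\mu(i)\}}$ has domain $\{i\}$, so equals $t_{\{i\}}$, giving $\mu(\mu(i))=\mu(i)$; thus $\mu$ is idempotent, and I would let the blocks $M_1,\ldots,M_s$ be its fibres (the linear orders being produced in the last step). From $t_At_{\{y\}}=t_{\{x\}}$ whenever $y\in\ran t_A$ and $t_A(x)=y$ one obtains $\mu(y)=\mu(x)$, so $t_A$ carries $A\cap M_i$ into $M_i$ for every $i$; in particular $\ran t_{M_i}\sbeq M_i$, so $t_{M_i}=\mathrm{id}_{M_i}$, and then $t_{M_i}t_A=t_A\big|_{A\cap M_i}$ shows $t_A\big|_{A\cap M_i}=t_{A\cap M_i}$. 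Consequently $t_A=\bigsqcup_i t_{A\cap M_i}$, and each $\{t_A\mid A\sbeq M_i\}$ is an $\rc$-cross-section of $\is{M_i}$ on which $\mu$ is constant. So everything reduces to the single-block case: an $\rc$-cross-section $U$ of $\isd$ with $\mu$ constant must be $R(\overrightarrow{\nd})$ for a suitable linear order.

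\emph{The single-block case (the main obstacle).} Write $g_i=t_{\nd\setminus\{i\}}$ and define $\sigma(i)$ by $\ran g_i=\nd\setminus\{\sigma(i)\}$. The identity $g_ig_{\sigma(i)}=g_i$ forces $g_{\sigma(i)}=\mathrm{id}_{\nd\setminus\{\sigma(i)\}}$, so $\sigma$ is idempotent; moreover if $\sigma$ fixed two distinct points $j_1,j_2$ then $t_{\{j_1,j_2\}}$ would have to send $j_1\mapsto\mu(j_1)$ and $j_2\mapsto\mu(j_2)$, contradicting injectivity since $\mu$ is constant. Hence $\sigma\equiv m_\ast$ for a single element $m_\ast$, and for $i\neq m_\ast$ the map $g_i$ is a bijection of $\nd\setminus\{i\}$ onto $\nd\setminus\{m_\ast\}$ whose chain decomposition consists of one chain from $m_\ast$ to $i$ together with some cycles. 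The crux — and the step I expect to be the real combinatorial work — is to rule out those cycles and to show that the chain lengths of the $g_i$ run bijectively through $2,3,\ldots,n$; this then defines a linear order $m_1=m_\ast<m_2<\cdots<m_n$ with $g_{m_k}=a_{1,k}$. I would carry this out by exploiting that a power $g_i^{\ell}$ with $\ell$ the chain length is idempotent, hence an identity, and by analysing the products $g_ig_j$ and the low-rank elements $t_A$, each time using that $T$ contains a unique element of each domain. Once the $g_i$ are identified with the generators $a_{1,k}$, the first half shows that the subsemigroup they generate together with $e$ is $R(\overrightarrow{\nd})$, which has $2^n$ elements and therefore equals $U$; reassembling over the blocks yields $T=R(\overrightarrow{M_1},\ldots,\overrightarrow{M_s})$.
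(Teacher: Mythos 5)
This theorem is not proved in the paper at all: it is imported from Ganyushkin--Mazorchuk \cite{GM}, so there is no in-paper argument to measure yours against. Judged on its own terms, the first half of your proposal is complete and correct: the identification of $R(\overrightarrow{M_1},\ldots,\overrightarrow{M_k})$ with $\{\psi_B\mid B\sbeq\nd\}$, the multiplication rule $\psi_B\psi_C=\psi_D$, and the inductive generation of every $\psi_B$ from the $a_{i,j}$ all check out. The reduction of the converse to a single block is also sound: the facts $t_{\nd}=e$, that $\ran t_A\sbeq A$ forces $t_A=\mathrm{id}_A$, the idempotency of $\mu$, the block-preservation of every $t_A$, and the splitting $t_A=\bigsqcup_i t_{A\cap M_i}$ are all correctly extracted from the uniqueness of the element with a given domain.

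The genuine gap is exactly where you flag it: the single-block case, which is the entire combinatorial content of the theorem. After establishing $\sigma\equiv m_\ast$ you still must show (i) that each corank-one element $g_i$ has no nontrivial cycles and (ii) that the chains of the $g_i$ are nested, so that their lengths run bijectively through $2,\ldots,n$ and define the linear order; neither is proved. The one concrete tool you name --- that $g_i^{\ell}$ is an idempotent, hence a partial identity --- is not sufficient even for (i): it only yields $\pi^{\ell}=\mathrm{id}$ for the cycle part $\pi=g_i\big|_{D}$ on the complement $D$ of the chain, and a single transposition with $\ell=2$ already satisfies that. Point (i) can in fact be closed with your own toolkit: $g_{m_\ast}g_i^{\ell-1}$ has domain $D$ and acts there as $\pi^{\ell-1}$, so uniqueness gives $\pi^{\ell-1}=t_D=\pi^{\ell}$ and hence $\pi=\mathrm{id}$. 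But this step, and above all the nesting and injectivity of the chain lengths in (ii), must actually be carried out; as written, the converse direction is a plan rather than a proof.
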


Since map $a \mapsto a^{-1}$ is an anti-isomorphism of semigroup
$\isd$ that sends  $\rc$-cross-sections to $\lc$-cross-sections,
then $\lc$-cross-sections is described similarly.

Now we turn to description of $\rc$- and $\lc$-cross-sections of
semigroup $\wisd$. It follows from Proposition \ref{green_2} that a
subsemigroup $H \sbs\wisd$ is an $\rc$-cross-section if and only if
for any $A \sbs \nd$ and any collection of idempotents $e_1, \ldots,
e_{|A|}\in E(S)$ there exists exactly one element $(f,a)\in H$
satisfying $\dom(a)=A$ and $f(x_i)\rc e_i$ for all $x_i \in A$.
Later we will use  this fact frequently.

We start the proof of the main result  with a sequel of lemmas.

\begin{lemma}\label{lem:l1} Let $R$ be an $\rc$-cross-section of semigroup
$\wisd$. Then  $$R_1=\{a \in \isd \med (f,a) \in R\}$$ is an
$\rc$-cross-section of semigroup $\isd$.
\end{lemma}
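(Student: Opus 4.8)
The plan is to show that $R_1$ is a subsemigroup of $\isd$ and that it meets every $\rc$-class of $\isd$ exactly once, i.e.\ for every $A \sbeq \nd$ there is exactly one $a \in R_1$ with $\dom(a) = A$. The key tool is the characterization of $\rc$-cross-sections of $\wisd$ recorded just before the lemma: $R$ contains, for each $A \sbeq \nd$ and each tuple of idempotents $(e_{x})_{x \in A}$ in $E(S)$, exactly one pair $(f,a)$ with $\dom(a) = A$ and $f(x) \rc e_x$ for all $x \in A$.

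First I would check closure. If $a, b \in R_1$, pick $(f,a), (g,b) \in R$; since $R$ is a subsemigroup, $(f,a)(g,b) = (fg^a, ab) \in R$, hence $ab \in R_1$. So $R_1$ is a subsemigroup of $\isd$. Next, existence: given $A \sbeq \nd$, apply the characterization with, say, all $e_x = \1$ (or any fixed choice of idempotents on $A$); this yields some $(f,a) \in R$ with $\dom(a) = A$, so $a \in R_1$ and $R_1$ meets the $\rc$-class $\{c \in \isd : \dom(c) = A\}$. The remaining and only delicate point is uniqueness: I must rule out the possibility that $R$ contains two pairs $(f,a), (g,b)$ with $\dom(a) = \dom(b) = A$ but $a \neq b$.

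The main obstacle is exactly this uniqueness step, because two such pairs in $R$ are not $\rc$-related in $\wisd$ (their second coordinates have the same domain, but by Proposition~\ref{green_2}(1) they would also need $f(x) \rc g(x)$ for all $x$, which a priori can fail). The idea is to use the semigroup structure of $R$ to manufacture a contradiction with the ``exactly one idempotent per $\rc$-class'' property in $\wisd$. Concretely, from $(f,a) \in R$ one can form its idempotent: the unique idempotent $\rc$-related to $(f,a)$ in $\wisd$ is $(f',e_A)$ where $e_A$ is the identity map on $A$ and $f'(x)$ is the idempotent $\rc$-related to $f(x)$ in $S$; since $R$ is an $\rc$-cross-section it contains this idempotent. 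Idempotents of $\wisd$ with second coordinate $e_A$ depend only on $A$ and on the tuple of idempotents of $S$, so I can arrange that $(f,a)$ and $(g,b)$ lie in the $\rc$-class of the \emph{same} idempotent $(h, e_A) \in R$ by first multiplying on the appropriate side to normalize the $S$-parts — more precisely, I would consider the elements $(f,a)(h,e_A)$ and $(g,b)(h,e_A)$ (or the corresponding idempotent $(h,e_A) = (f,a)^{-1}(f,a)$-type product), observe these lie in $R$, have second coordinates $a$ and $b$ respectively but equal first-coordinate ``idempotent profile'', and hence are $\rc$-related idempotents' companions. Pushing this through, two distinct $a, b$ with the same domain would force $R$ to contain two distinct elements in one $\rc$-class of $\wisd$, contradicting that $R$ is a cross-section. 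Assembling closure, existence, and this uniqueness argument shows $R_1$ is an $\rc$-cross-section of $\isd$.
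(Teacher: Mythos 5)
Your closure and existence steps are fine, and you have correctly isolated the real content of the lemma: ruling out two elements $(f,a),(g,b)\in R$ with $\dom(a)=\dom(b)$ but $a\neq b$. However, the uniqueness argument you sketch rests on a false premise. You assert that the unique idempotent $(f',e_A)$ in the $\rc$-class of $(f,a)$ lies in $R$ ``since $R$ is an $\rc$-cross-section''. A cross-section contains exactly one element of each class, and in the class of $(f,a)$ that element is $(f,a)$ itself; it need not be the idempotent. (Already in $\mathcal{IS}_2$ the $\rc$-cross-section $\langle [1],[1,2]\rangle\sqcup\{e\}$ contains $[1,2]$ but not the idempotent $id_{\{1\}}$ of its $\rc$-class.) Moreover, even if some idempotent $(h,e_A)$ were available in $R$, multiplying $(f,a)$ and $(g,b)$ by it does not equalize the first coordinates' idempotent profiles: the products have first coordinates $x\mapsto f(x)h(xa)$ and $x\mapsto g(x)h(xb)$, which are not pointwise $\rc$-related in general, and right multiplication by $(h,id_A)$ can even shrink the domain of the second coordinate when $\ran(a)\not\sbeq A$. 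So the intended contradiction does not yet materialize.

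The paper closes exactly this gap with a more special normalizer, exploiting the zero $\0$ of $S$, whose $\rc$-class is the singleton $\{\0\}$. The $\rc$-class of $\wisd$ indexed by $A=\nd$ and the all-$\0$ tuple contributes a unique element $(\varnothing,e)\in R$ with $\varnothing\equiv\0$ and $\dom(e)=\nd$; since $e$ is then a permutation, $(\varnothing,e)^2=(\varnothing,e^2)$ lies in the same $\rc$-class, so the cross-section property forces $e^2=e$ and hence $e=id_{\nd}$. Because $\0$ is absorbing, $(\varnothing,e)(f,a)=(f,a)(\varnothing,e)=(\varnothing|_{\dom(a)},a)\in R$ for every $(f,a)\in R$. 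Now if $(f,a),(g,b)\in R$ satisfy $\dom(a)=\dom(b)$, the elements $(\varnothing|_{\dom(a)},a)$ and $(\varnothing|_{\dom(b)},b)$ of $R$ lie in a single $\rc$-class of $\wisd$, hence coincide, giving $a=b$. Replacing your normalization step by this argument completes the proof; the rest of your proposal can stand as written.
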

\begin{proof}
Let $(f,a)$, $(g,b)$ be elements of $\rc$-cross-section $R$, that
is, $a,b \in R_1$. The product of these elements
$(f,a)(g,b)=(fg^a,ab)$ is again in $\rc$-cross-section $R$, hence if
$a,b \in R_1$, then also $ab\in R_1$. Then $R_1$ is a semigroup.

If for element $(\varnothing, e)\in R$ we have $\dom(e)=\nd$, then
$\dom(e^2)=\dom(e)$, and since an $\rc$-cross-section contains only
one element, domain of which is equal $\nd$, then $e^2=e$, and hence
$e=id_{\nd}$. Then for every element $(f,a)\in R$ the product
$(f,a)(\varnothing, e)=(\varnothing, e)(f,a)=(\varnothing, a)\in R$.

As $R$ is an $\rc$-cross-section, then for every subset $A \sbs \nd$
there exists exactly one element $a\in R_1$ such that $\dom(a)=A$.
So, $R_1$ is an $\rc$-cross-section of semigroup $\isd$.
\end{proof}

\begin{lemma}\label{lem:l2}
Let $R$ be an $\rc$-cross-section of semigroup $\wisd$. Then
$$R_2=\{f(1) \med (f,a) \in R,  a=id_{\nd}, f(x)=\0, \text{\
for all\ } x\neq 1\}$$ is an $\rc$-cross-section of semigroup $S$.
\end{lemma}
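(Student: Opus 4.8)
The plan is to show that $R_2$ satisfies the defining property of an $\rc$-cross-section of $S$: for every idempotent $e\in E(S)$ there is exactly one element $s\in R_2$ with $s\rc e$, and that $R_2$ is closed under multiplication. First I would fix notation: for $s\in S$, let $\delta_s\in \spn$ be the function with $\dom(\delta_s)=\nd$, $\delta_s(1)=s$, and $\delta_s(x)=\0$ for $x\neq 1$, so that $(\delta_s, id_{\nd})$ is the generic element entering the definition of $R_2$. The key arithmetic observation is that $(\delta_s, id_{\nd})(\delta_t, id_{\nd}) = (\delta_s\,\delta_t^{\,id_{\nd}}, id_{\nd}) = (\delta_{st}, id_{\nd})$, since $\delta_t^{\,id_{\nd}} = \delta_t$ and pointwise multiplication gives $st$ at $1$ and $\0\cdot\0 = \0$ elsewhere. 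Hence the set $R_2' = \{(\delta_s, id_{\nd})\med (\delta_s, id_{\nd})\in R\}$, if nonempty and closed, is a subsemigroup of $\wisd$ isomorphic (via $(\delta_s, id_{\nd})\mapsto s$) to $R_2$; so it suffices to argue about $R_2'$.

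Next I would establish that $R_2'$ is closed under multiplication inside $R$. Given $(\delta_s, id_{\nd}), (\delta_t, id_{\nd})\in R$, their product $(\delta_{st}, id_{\nd})$ lies in $R$ because $R$ is a subsemigroup, and it visibly has the form required for membership in $R_2'$; thus $st\in R_2$, so $R_2$ is a semigroup. For the cross-section property, take any idempotent $e\in E(S)$. By Proposition~\ref{green_2}(1), two elements $(f, id_{\nd})$ and $(g, id_{\nd})$ with $f(x)\rc g(x)$ for all $x$ are $\rc$-related in $\wisd$; in particular $(\delta_s, id_{\nd})\rc(\delta_t, id_{\nd})$ in $\wisd$ iff $s\rc t$ in $S$ (using that $\0\rc\0$ automatically). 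Since $R$ is an $\rc$-cross-section of $\wisd$, it contains exactly one element whose first coordinate is $\rc$-equivalent, coordinatewise, to the function $x\mapsto \delta_e(x)$ and whose second coordinate has domain $\nd$ — and that unique element must be of the form $(h, id_{\nd})$ with $h(x)\rc \delta_e(x)$ for all $x$. Here I need to know that this unique representative actually has first coordinate equal to $\delta_s$ for some $s$ with $s\rc e$, i.e. that $h(x) = \0$ for $x\neq 1$; this follows because $\0$ is the unique idempotent in its own $\rc$-class in $S$ and the representative is forced to have $h(x)$ idempotent-related appropriately — more carefully, the requirement $h(1)\rc e$ and $h(x)\rc\0$ for $x\neq 1$, combined with the fact that an $\rc$-cross-section of $S$ (which we do not yet have, so instead I use directly the uniqueness in $R$) pins down the value; I would phrase this as: the element $(\delta_s, id_{\nd})$ for any $s$ with $s\rc e$ is $\rc$-equivalent in $\wisd$ to $(\delta_e, id_{\nd})$, so $R$ contains exactly one such, giving exactly one $s\in R_2$ with $s\rc e$.

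The main obstacle is bookkeeping the relationship between "uniqueness of the representative in $R$ among all elements of $\wisd$ in a given $\rc$-class" and "uniqueness of $s\in R_2$ with $s\rc e$": I must make sure that the unique element of $R$ in the $\rc$-class of $(\delta_e, id_{\nd})$ genuinely has the restricted shape $(\delta_s, id_{\nd})$ rather than some $(h, id_{\nd})$ with $h$ not supported at $1$. The clean way around this is to note that whatever this unique representative $(h,a)$ is, it satisfies $\dom(a)=\nd$, hence (as in Lemma~\ref{lem:l1}) $a = id_{\nd}$, and $h(x)\rc\0$ for $x\neq 1$ while $h(1)\rc e$; but then $R$ also contains $(\delta_{h(1)}, id_{\nd})$? — not obviously. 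So instead I would only claim what is needed: $R_2$ is nonempty (take the unique element of $R$ in the $\rc$-class of $(\delta_e, id_{\nd})$; apply Lemma~\ref{lem:l1} to get its $a$-part is $id_{\nd}$; multiply on the right by the idempotent $(\delta_f, id_{\nd})$-type elements of $R$ to zero out the other coordinates, staying in $R$), contains for each $\rc$-class of $S$ at most one element (by uniqueness in $R$ together with the coordinatewise $\rc$-description), and is closed — hence it is an $\rc$-cross-section of $S$. I expect the delicate point to be the "zero out the other coordinates while staying in $R$" step, which I would handle using the idempotents of $R_2'$ themselves and the fact that $E(S)$ is finite so the relevant idempotent elements of $\wisd$ must all appear in $R$.
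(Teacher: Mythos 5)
Your overall strategy (closure of $R_2$ via the computation $(\delta_s,id_{\nd})(\delta_t,id_{\nd})=(\delta_{st},id_{\nd})$, then existence and uniqueness of a representative of each $\rc$-class of $S$ read off from the cross-section property of $R$ applied to the domain $\nd$ and the idempotent tuple $(e,\0,\dots,\0)$) is the same as the paper's, and the closure half is correct. The gap is exactly at the point you flag as delicate and then fail to resolve: you must show that the unique element $(h,a)\in R$ with $\dom(a)=\nd$, $h(1)\rc e$ and $h(x)\rc \0$ for $x\neq 1$ actually satisfies $h(x)=\0$ for $x\neq 1$, so that it is of the form $(\delta_s,id_{\nd})$ and contributes an element to $R_2$. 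Your stated justification, that $\0$ is the unique \emph{idempotent} in its own $\rc$-class, is too weak: every $\rc$-class of an inverse semigroup contains a unique idempotent, so this singles out nothing. What you actually need is that the $\rc$-class of $\0$ is the singleton $\set{\0}$, which is true and immediate: $s\rc\0$ means $ss^{-1}=\0$, whence $s=ss^{-1}s=\0 s=\0$. With that one line the existence part closes, and uniqueness then follows, as you say, from uniqueness of the representative in $R$ combined with Proposition~\ref{green_2}.

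Your fallback plan --- multiply on the right by ``idempotent $(\delta_f,id_{\nd})$-type elements of $R$'' to zero out the other coordinates, justified by the claim that ``the relevant idempotent elements of $\wisd$ must all appear in $R$'' --- rests on a false premise. An $\rc$-cross-section contains exactly one element of each $\rc$-class, and that element need not be the idempotent of the class: already in $\mathcal{IS}_2$ the cross-section $R(\overrightarrow{\set{1,2}})$ contains the chain $[1,2]$ rather than the idempotent $id_{\set{1}}$ as its representative of the $\rc$-class of elements with domain $\set{1}$. So you cannot assume the elements $(\delta_f,id_{\nd})$ with $f$ idempotent lie in $R$, and any attempt to reach them by taking powers or products inside $R$ circles back to the very question of whether elements of $R$ supported off $1$ have value $\0$ there. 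The detour is both broken as stated and unnecessary once the singleton $\rc$-class of $\0$ is observed.
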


\begin{proof}
Let $(f,a),(g,b) \in R$ be such that $a=b=id_{\nd}$, $f(x)=g(x)= \0,
\text{ for all } x \neq 1$, that is, $f(1), g(1) \in R_2$. Product
$(f,a)(g,b)=(fg^a, ab)$ satisfies condition $ab=id_{\nd}$,
$fg^a(x)=\0$ for all $x\neq 1$. Since $fg^a(1)\in R_2$ and
$fg^a(1)=f(1)g(1)$, then for $f(1), g(1) \in R_2$ their product also
belongs to $R_2$. Hence $R_2$ is a semigroup. As for every $f(1) \in
R_2$ the corresponding element $(f,a)$ is an element of an
$\rc$-cross-section of semigroup $\wisd$, then for every idempotent
$e \in E(S)$ exists exactly one element $f(1)\in R_2$ such that $e
\rc f(1)$. Thus $R_2$ is indeed an $\rc$-cross-section of semigroup
$S$.
\end{proof}

\begin{lemma} \label{lem:l3}
Let $S$ be an inverse semigroup, $\psi:S \ra S$ be an automorphism,
$R$ be an $\rc$-cross-section of $S$. Then $\psi(R)$ is also an
$\rc$-cross-section of $S$.
\end{lemma}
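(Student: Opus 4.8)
The plan is to verify directly that $\psi(R)$ satisfies the defining property of an $\rc$-cross-section of $S$: it is a subsemigroup, and it meets every $\rc$-class in exactly one point. Since $\psi$ is an automorphism of $S$, it is in particular a bijective homomorphism, so $\psi(R)$ is automatically a subsemigroup of $S$; no work is needed there. The only real content is that $\psi$ maps $\rc$-classes bijectively to $\rc$-classes.

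First I would recall that Green's $\rc$-relation is defined purely in terms of the semigroup structure, namely $a\rc b \Leftrightarrow aS^1=bS^1$. Hence any automorphism $\psi$ preserves it in both directions: $a\rc b$ if and only if $\psi(a)\rc\psi(b)$. (For the forward implication, $aS^1=bS^1$ gives $\psi(a)\psi(S^1)=\psi(b)\psi(S^1)$, and $\psi(S^1)=S^1$ since $\psi$ is surjective and preserves the identity; the converse follows by applying $\psi^{-1}$, which is again an automorphism.) Consequently $\psi$ permutes the set of $\rc$-classes of $S$, and moreover $\psi$ restricted to a single $\rc$-class is a bijection onto its image $\rc$-class.

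Next I would fix an arbitrary $\rc$-class $C$ of $S$ and show $\psi(R)\cap C$ is a singleton. Write $C=\psi(C')$ for the unique $\rc$-class $C'$ with $\psi(C')=C$ (namely $C'=\psi^{-1}(C)$). Since $R$ is an $\rc$-cross-section, $R\cap C'=\{r\}$ for a unique $r$. Then $\psi(r)\in\psi(R)\cap C$, so the intersection is nonempty. For uniqueness, suppose $\psi(r')\in\psi(R)\cap C$ with $r'\in R$; then $\psi(r')\in C=\psi(C')$ forces $r'\in C'$ by injectivity of $\psi$, hence $r'=r$ and $\psi(r')=\psi(r)$. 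Therefore $\psi(R)$ meets each $\rc$-class in exactly one element, and being a subsemigroup it is an $\rc$-cross-section of $S$.

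There is no serious obstacle here; the lemma is essentially the observation that "being an $\rc$-cross-section" is a property invariant under automorphisms, because $\rc$ itself is. The one point deserving a line of care is the identity $\psi(S^1)=S^1$, i.e. that an automorphism of $S$ sends the adjoined (or existing) identity to itself and is surjective onto $S^1$; after that the argument is purely formal bookkeeping with $\psi$ and $\psi^{-1}$.
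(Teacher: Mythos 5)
Your proof is correct, and it is a little more general than the one in the paper. You argue directly from the definition $a\rc b \Leftrightarrow aS^1=bS^1$ that any automorphism preserves the $\rc$-relation in both directions, hence permutes the $\rc$-classes, and then you pull the cross-section property back through $\psi^{-1}$; this uses nothing about $S$ being inverse. The paper instead exploits the inverse-semigroup structure: it identifies each $\rc$-class with its unique idempotent and characterizes an $\rc$-cross-section as containing, for each idempotent $e$, exactly one element $a$ with $aa^{-1}=e$; the proof then just observes that $\psi$ permutes $E(S)$ and that $aa^{-1}=e$ for $a=\psi(b)$ if and only if $bb^{-1}=\psi^{-1}(e)$. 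The two arguments are equally short; yours buys the statement for arbitrary semigroups (and sidesteps any appeal to the structure of $\rc$-classes in inverse semigroups), while the paper's fits the idempotent-counting viewpoint it uses elsewhere (e.g.\ in the cardinality argument of Lemma \ref{lem:l4}). Your one point of care --- that $\psi(S^1)=S^1$ when the identity is adjoined --- is handled correctly, though it is cleaner to note simply that $aS^1=aS\cup\{a\}$, so $\psi(aS^1)=\psi(a)S\cup\{\psi(a)\}=\psi(a)S^1$ without extending $\psi$ at all.
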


\begin{proof}
For any idempotent $e \in E(S)$ of semigroup $S$ there exists unique
element $a \in \psi(R)$ such that $aa^{-1}=e$. Let $a=\psi(b)$ for
$b\in R$. Since $\psi$ is an automorphism, then $\psi(bb^{-1})=e$ if
and only if $aa^{-1}=e$. An element $\psi^{-1}(e)$ is also an
idempotent of semigroup $S$. Then $bb^{-1}=\psi^{-1}(e)$ if and only
if $\psi(bb^{-1})=e$. The uniqueness of element $b$ such that
$bb^{-1}=\psi^{-1}(e)$ follows from the uniqueness of element $a$.
Thus $\psi(R)$ is an $\rc$-cross-section of $S$.
\end{proof}

\begin{lemma} \label{lem:l4}
Let $R$ be an $\rc$-cross-section of semigroup $S \pwr\is{M}$. If
$R_1=R(\overrightarrow{M})$, then $R \simeq R_2 \pwr R_1$.
\end{lemma}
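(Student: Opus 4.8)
The plan is to produce an explicit isomorphism $\Theta\colon R\to R_2\pwr R_1$. Write $M=\set{m_1<\dots<m_n}$, so that $R_1=R(\overrightarrow M)$ is generated by the chains $a_{1,1},\dots,a_{1,n}$, and put $c_1:=id_M$, $c_j:=a_{1,2}a_{1,3}\cdots a_{1,j}$ for $j\ge2$; a direct check gives $m_1\in\dom(c_j)$ and $m_1c_j=m_j$. For $x\in M$ set
\[
R_2^{(x)}:=\set{f(x)\med (f,id_M)\in R,\ f(y)=\0\text{ for all }y\neq x}\sbeq S ,
\]
which, exactly as in the proof of Lemma~\ref{lem:l2}, is an $\rc$-cross-section of $S$; note $R_2=R_2^{(m_1)}$. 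Two elementary remarks are used repeatedly: (i) $\1\in R_2^{(x)}$, because in a finite inverse monoid an element $\rc$-related to $\1$ is right invertible, hence a unit, hence — lying together with all of its powers in the subsemigroup $R_2^{(x)}$ — equal to $\1$; (ii) consequently the idempotent $(\delta_x,id_M)$ of $S\pwr\is M$, where $\delta_x\colon M\to S$ sends $x$ to $\1$ and every other point to $\0$, is the unique element of $R$ lying in its own $\rc$-class, so $(\delta_x,id_M)\in R$.

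Using (ii) I would then read off the structure of the first coordinates of $R$. Multiplying an arbitrary $(f,a)\in R$ on the right by $(\delta_{xa},id_M)$ collapses it to the element of $R$ supported only at $x$ with value $f(x)$; hence, for every $x\in\dom(a)$, $f(x)$ lies in the $\rc$-transversal $B^{(a,x)}:=\set{f'(x)\med (f',a)\in R,\ f'\text{ supported at }x}$, and in particular $R^{0}:=\set{(f,id_M)\in R}$ equals $\set{(f,id_M)\med f(x)\in R_2^{(x)}\text{ for all }x}$. Left and right multiplication of $(f,a)$ supported at $x$ by members of $R^{0}$ give $R_2^{(x)}B^{(a,x)}\sbeq B^{(a,x)}$ and $B^{(a,x)}R_2^{(xa)}\sbeq B^{(a,x)}$; since all three sets are $\rc$-transversals containing (by (i)) the relevant copy of $\1$, comparison forces
\[
B^{(a,x)}=R_2^{(x)}\,t^{(a,x)}=t^{(a,x)}\,R_2^{(xa)},
\]
where $t^{(a,x)}$ is the unique unit of $S$ lying in $B^{(a,x)}$. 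In particular $R_2^{(xa)}=(t^{(a,x)})^{-1}R_2^{(x)}t^{(a,x)}$, so all $R_2^{(x)}$ are conjugate; putting $\gamma_{m_j}:=t^{(c_j,m_1)}$ (so $\gamma_{m_1}=\1$) gives $R_2^{(m_j)}=\gamma_{m_j}^{-1}R_2\,\gamma_{m_j}$.

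Next I would define $\Theta(f,a):=(\bar f,a)$ with $\bar f(x):=\gamma_x\,f(x)\,\gamma_{xa}^{-1}$ for $x\in\dom(a)$. Since the $\gamma$'s are units, $\Theta$ is injective, and it is a homomorphism: for $x\in\dom(ab)$, using $\gamma_{xa}^{-1}\gamma_{xa}=\1$,
\[
\overline{fg^{a}}(x)=\gamma_x f(x)g(xa)\gamma_{x(ab)}^{-1}=(\gamma_x f(x)\gamma_{xa}^{-1})(\gamma_{xa}g(xa)\gamma_{xab}^{-1})=\bar f(x)\,\bar g(xa),
\]
which is the value at $x$ of the first coordinate of $\Theta(f,a)\Theta(g,b)$. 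To see that $\Theta$ lands in $R_2\pwr R_1$ one needs $\bar f(x)\in R_2$, equivalently $B^{(a,x)}=\gamma_x^{-1}R_2\,\gamma_{xa}$ for \emph{every} $a\in R_1$. Granting this, $\Theta$ is an injective homomorphism into $R_2\pwr R_1$, which is itself an $\rc$-cross-section of $S\pwr\is M$ (each subset of $M$ is the domain of a unique element of $R_1$, and in each coordinate each idempotent of $S$ is $\rc$-related to a unique element of $R_2$); since $\abs{R}=\abs{E(S\pwr\is M)}=\abs{R_2\pwr R_1}$, the map $\Theta$ is onto, hence an isomorphism.

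The one nonformal point — and the place where the hypothesis $R_1=R(\overrightarrow M)$ is genuinely needed — is the identity $B^{(a,x)}=\gamma_x^{-1}R_2\,\gamma_{xa}$. A priori the unit $t^{(a,x)}$ only \emph{normalises} $R_2^{(x)}$, and one must upgrade this to: $\gamma_x t^{(a,x)}\gamma_{xa}^{-1}$ stabilises $R_2$ under right multiplication. Because the transversals multiply along composites, $B^{(ab,x)}=B^{(a,x)}B^{(b,xa)}$, it suffices to verify the identity on the generators $a_{1,k}$; and here the single-block structure of $R(\overrightarrow M)$ — in particular that $\ran(a_{1,k})=M\setminus\set{m_1}$, so that $m_1$ lies outside the range of every non-identity element of $R_1$, which rigidly constrains how the chains interact — is what pins the $t^{(a,x)}$ down enough to exhibit the family $t^{(\cdot,\cdot)}$ as a coboundary of the vertex units $\gamma_x$ modulo the right stabiliser of $R_2$. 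I expect this coherence verification to be the main obstacle; everything else is bookkeeping with the partial-wreath multiplication.
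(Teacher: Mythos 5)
Your proposal follows essentially the same strategy as the paper's proof: extract the vertex cross-sections $R_2^{(x)}$, pick out the unit $\vph_x=\gamma_x$ sitting over each point $x$, conjugate by these units to define $\Theta(f,a)=(\bar f,a)$ with $\bar f(x)=\gamma_x f(x)\gamma_{xa}^{-1}$, and finish by comparing $\abs{R}$ with $\abs{E(S\pwr\is{M})}=(\abs{E(S)}+1)^{m}=\abs{R_2\pwr R_1}$. The difficulty is that the one step you explicitly defer --- the identity $B^{(a,x)}=\gamma_x^{-1}R_2\,\gamma_{xa}$, equivalently the cocycle relation $\gamma_x\,t^{(a,x)}=\gamma_{xa}$ --- is not a routine coherence check to be filled in later; it is the mathematical content of the lemma. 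Without it you do not know that $\Theta$ maps $R$ into $R_2\pwr R_1$ at all (a priori the units $t^{(a,x)}$ only conjugate $R_2^{(x)}$ onto $R_2^{(xa)}$, as you note), and the cardinality argument at the end cannot compensate for an unverified inclusion. As written, the proof is incomplete at exactly the point where the hypothesis $R_1=R(\overrightarrow{M})$ enters.

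For comparison, the paper closes this gap not by the generator-by-generator analysis of the chains $a_{1,k}$ that you anticipate, but by a short uniqueness ("sandwich") argument that works for arbitrary $(f,a)\in R$ at once: choosing $(f_x,a_x)\in R$ with $a_x(1)=x$ and $(\psi_{x^a},b_{x^a})\in R$ with $b_{x^a}(x^a)=m$ (such elements exist in $R$ precisely because $M$ is a single block of $R(\overrightarrow{M})$, so every point is joined to every other by elements of $R_1$ with the right domains), one computes that $(f_x,a_x)(f,a)(\psi_{x^a},b_{x^a})$ has underlying partial bijection with domain $\set{1}$ sending $1\mapsto m$ and first coordinate $\rc$-related to $\1$; by uniqueness in the cross-section this product must equal $(f_m,a_m)$, which forces $\vph_x f(x)\,\psi_{x^a}(x^a)=\vph_m=\vph_{x^a}\,\psi_{x^a}(x^a)$ and hence, cancelling the unit, $\gamma_x f(x)\gamma_{x^a}^{-1}=\1$ --- exactly your cocycle identity. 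A second sandwich of the same kind (with the auxiliary elements $\psi_i$, $\s$, $\tau_j$) then shows that every conjugated value $\bar f(i)$ lies in $R_2$, after which the counting argument you give does finish the proof. So your reduction of the problem is correct and your framework is sound, but you should replace the paragraph beginning ``The one nonformal point'' with an actual argument of this type; the reduction to the generators $a_{1,k}$ via $B^{(ab,x)}=B^{(a,x)}B^{(b,xa)}$ is workable but still leaves the generator case unproved, and the direct uniqueness argument is both shorter and complete.
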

\begin{proof}
The following holds for a partial wreath product $P=P_2\pwr P_1$ of
$\rc$-cross-sections.
\begin{equation}
\text{If  $(f,a)\in R$ is such that $f(i)\rc \1$ for $i\in \dom(a)$,
then $f(i)=\1$.}\label{1p}
\end{equation}  We will show now that the general case
when $f(i)\neq \1$ reduces to this one.

We may assume $M=\set{1,2, \ldots, m}$ with the usual order. Let
$(f_i, a_i)$, $i=1, \ldots, m$ be such elements of an
 $R$ that $a_i(1)=i$, $f_i(1)\rc \1$.  Put $\vph_i=f_i(1)$.

Consider now a map $\Theta: S\pwr\is{M} \ra S \pwr \is{M}$, which
acts as: $(f,a) \mapsto (g,a)$, where for $x \in \dom(a)$ we define
$g(x)=\vph_x f(x) \vph^{-1}_{x^a}$. It is easy
to check that this map is an isomorphism. %It is clear that it is
%bijective, also we have $\Theta((f,a)(g,b))=\Theta(fg^a,ab)$,
%$\Theta(fg^a(x))=\vph_xf(x)g(x^a)\vph_{x^{ab}}^{-1}=\vph_xf(x)\vph^{-1}_{x^a}\vph_{x^a}g(x^a)\vph^{-1}_{x^{ab}}=\Theta(f(x)\Theta(g(x^a)))$.
%Hence $\Theta((f,a)(g,b))=\Theta((f,a))\Theta((g,b)).$
From Lemma \ref{lem:l3} it follows that isomorphic image $\Theta(R)$
of an $\rc$-cross-section $R$ is an $\rc$-cross-section too.
Moreover, the next paragraph shows that for $\Theta(R)$ the property
\eqref{1p} is true.

Let $\psi_j=(f_{\psi_j}, b_{\psi_j})$ be such an element of
$\rc$-cross-section that $b_{\psi_j}(j)=m$, $f_{\psi_j}(j)\rc \1$.
Let $(f,a) \in R$ be an element such that $f(x)\rc \1$ for some $x
\in \dom(a)$. Consider now the product of elements $(f_x,a_x),
(f,a)$ and $(\psi_{x^a}, b_{x^a})$. We obtain
$(f_x,a_x)(f,a)(\psi_{x^a}, b_{x^a})=(f_x f^{a_x} \psi_{x^a}^{a_xa},
a_xab_{x^a})$. Domain of component $a_xab_{x^a}$ is the set $\{1\}$
and $a_xab_{x^a}(1)=m$. Also $f_x f^{a_x} \psi_{x^a}^{a_xa}\rc \1$,
and $(f_x f^{a_x}
\psi_{x^a}^{a_xa})(1)=f_x(1)f(1^{a_x})\psi_{x^a}(1^{a_xa})=f_x(1)f(x)\psi_{x^a}(x^a)$.
It is obvious that $(f_x,a_x)(f,a)(\psi_{x^a}, b_{x^a})=(f_m,a_m)$,
hence $(\vph_{x} f^{a_x} \psi_{x^a}^{a_{x}a})(1)=\vph_m$. Then we
have $(\vph_x f^{a_x} \vph_{x^a}^{-1}\vph_{x^a}
\psi_{x^a}^{a_xa})(1)=\vph_m$, but $\vph_{x^a}\psi{x^a}=\vph_m$.
Thus $g(x)=\vph_xf(x)\vph^{-1}_{x^a}=\1$.

As $R\simeq \Theta(R)$, we may assume that for $R$ itself the
property \eqref{1p} holds. In this case we will show $R=R_2\pwr
R_1$.

Let $\vph=\fa{\vph}$ be some element of $\rc$-cross-section $R$.
Then $a_{\vph} \in R_1$. We want to show that $f_{\vph}(i) \in R_2$
for arbitrary $i \in\dom(a_{\vph})$.

For that we put $j=a(i)$ and  define three groups of elements of
semigroup $S\pwr\is{M}$: element $\psi_i=\fa{\psi_i}$, where
$a_{\psi_i}=[1,i, i+1, \ldots, m-1, m]$, $f_{\psi_i}(1)=\1$,
$f_{\psi_j}=\0, j \geq i$; element $\s=\fa{\s}$, where
$a_{\s}=id_{M}$, and $f_\s(1)\rc f_{\vph}(i)$, and $f_{\s(1)}=\0$
when $x \neq 1$; element $\tau_j=\fa{\tau}$, where $a_{\tau_j}=[j,
m]$, $f_{\tau_j}(x)=\1$ for $x \in \dom(a)$. All of them are in $R$,
because they are the only possible elements for corresponding
domains and idempotents.

Consider product of elements $\psi_i$, $\vph$, and $\tau_j$. Then we
obtain $\psi_i \cdot \vph \cdot
\tau_j=(f_{\psi_i}f_{\vph}^{a_{\psi_i}}f_{\tau_j}^{a_{\psi_i}a_{\vph}},
a_{\psi_i}a_{\vph}a_{\tau_j})$. Domain of component
$a_{\psi_i}a_{\vph}a_{\tau_j}$ is the set $\{1\}$. Then
$\dom(f_{\psi_i}f_{\vph}^{a_{\psi_i}}f_{\tau_j}^{a_{\psi_i}a_{\vph}})=\dom(a_{\psi_i}a_{\vph}a_{\tau_j})=\{1\}$
 and
$({f_{\psi_i}f_{\vph}^{a_{\psi_i}}f_{\tau_j}^{a_{\psi_i}a_{\vph}}})(1)
\rc f_{\vph}(i)$.

For the product $\s \cdot \psi_{m}=(f_{\s}f_{\psi_{m}},
a_{\s}a_{\psi_{m}})$  we have that domain of $a_{\s}a_{\psi_{m}}$ is
the set $\{1\}$, then
$\dom(f_{\s}f_{\psi_{m}})=\dom(a_{\s}a_{\psi_{m}})=\{1\}$ and
$({f_{\s}f_{\psi_{m}}})(1)\rc f_{\vph}(i)$.

Thus we obtain that
$\dom(a_{\psi_i}a_{\vph}a_{\tau_j})=\dom(a_{\s}a_{\psi_{m}})$ and
$(f_{\psi_i}f_{\vph}^{a_{\psi_i}}f_{\tau_j}^{a_{\psi_i}a_{\vph}})(1)\rc(f_{\s}f_{\psi_{m}})(1)$.
Then
$(f_{\psi_i}f_{\vph}^{a_{\psi_i}}f_{\tau_j}^{a_{\psi_i}a_{\vph}})(1)=(f_{\s}f_{\psi_{m}})(1)$,
because $R$ is $\rc$-cross-section.

As element $f_{\s}(1)$ lays in $\rc$-cross-section $R_2$, then also
$f_{\s}f_{\psi_{m}}= f_{\s}(1)f_{\psi_{m}}(1)$ is in $R_2$, because
$f_{\psi_{m}}=\1$. Then product
$f_{\psi_i}f_{\vph}^{a_{\psi_i}}f_{\tau_j}^{a_{\psi_i}a_{\vph}}=f_{\psi_1}(1)f_{\vph}(i)f_{\tau_j}(j)$
also in $R_2$. Since $f_{\psi_i}(1)=f_{\tau_j}(j)=\1$, then
$f_{\vph}(i)\in R_2$. Concluding we have $R \sbs R_2\pwr R_1$.

The number of elements of $\rc$-cross-section of inverse semigroup
is equal to the number of idempotents of this semigroup. The element
$(f,a)$ of the semigroup $S\pwr\mathcal{IS}(M)$ is idempotent iff
all $a$ and $f(i)$ are idempotents. Then number of idempotents of
this wreath product equals $(\abs{E(S)}+1)^{m}$. The number of
elements of partial wreath product $R_2 \pwr R_1$ equals $\sum
_{i=1}^{m} \vert R_2 \vert^i \cdot \binom{m}{i}=(\abs{E(S)}+1)^{m}$.
Therefore $R=R_2 \pwr R_1$.

\end{proof}

\begin{theorem} \label{thm:rcross}
Let $R(\overrightarrow{M_1}, \overrightarrow{M_2}, \ldots,
\overrightarrow{M_k})$ be $\rc$-cross-section of semigroup $\isd$,
$R_1, \ldots, R_k$ be $\rc$-cross-sections of semigroup $S$. Then
$$R=(R_1\pwr R(\overrightarrow{M_1}) )\times (R_2 \pwr
R(\overrightarrow{M_2}) ) \times \ldots \times (R_{k}\pwr
R(\overrightarrow{M_k}) )$$ is an $\rc$-cross-section of semigroup
$S \pwr \isd$. Moreover, every $\rc$-cross-section is isomorphic to
$(R_1 \pwr R(\overrightarrow{M_1}) )\times (R_2 \pwr
R(\overrightarrow{M_2}) ) \times \ldots \times (R_{k}\pwr
R(\overrightarrow{M_k}) )$. \end{theorem}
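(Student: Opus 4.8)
\emph{Proof strategy.} The plan is to treat the two assertions in turn, the converse being the substantial one.

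\textbf{Sufficiency.} I would begin from the remark that each generator $a_{i,j}$ of $R(\overrightarrow{M_1},\ldots,\overrightarrow{M_k})$ maps the block $M_i$ into itself and fixes every other block pointwise; consequently every $a\in R_1:=R(\overrightarrow{M_1},\ldots,\overrightarrow{M_k})$ satisfies $M_i a\sbeq M_i$ for each $i$, and $R_1$ is the internal direct product of the subsemigroups $R(\overrightarrow{M_i})\sbeq\is{M_i}$. Because of this block-invariance the external product $\prod_{i=1}^{k}\bigl(R_i\pwr R(\overrightarrow{M_i})\bigr)$ embeds into $\wisd$ by gluing block-components, its image being $R=\bigl\{(f,a)\med a\in R_1,\ f(x)\in R_i\text{ for }x\in M_i\cap\dom a\bigr\}$, and this image is a subsemigroup: for $(f,a),(g,b)\in R$ and $x\in M_i\cap\dom(ab)$ block-invariance gives $xa\in M_i$, whence $(fg^a)(x)=f(x)\,g(xa)\in R_iR_i\sbeq R_i$. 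It then suffices to check the criterion stated just after Proposition~\ref{green_2}: given $A\sbeq\nd$ and idempotents $(e_x)_{x\in A}$ of $S$, take the unique $a\in R_1$ with $\dom a=A$ and, for each $x\in A\cap M_i$, the unique $\varphi\in R_i$ with $\varphi\rc e_x$; these exist and are unique because $R_1$ and the $R_i$ are $\rc$-cross-sections, and they glue to the unique element of $R$ of the required type.

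\textbf{Necessity.} Here the plan is the following chain. By Lemma~\ref{lem:l1}, $R_1=\{a\in\isd\med(f,a)\in R\}$ is an $\rc$-cross-section of $\isd$, so by the theorem of \cite{GM} quoted above $R_1=R(\overrightarrow{M_1},\ldots,\overrightarrow{M_k})$ for some partition $\nd=M_1\sqcup\dots\sqcup M_k$ with chosen orders. By the block-invariance above, $R$ sits inside the subsemigroup $U:=S\pwr(\is{M_1}\times\dots\times\is{M_k})$ of $\wisd$ (where $\is{M_1}\times\dots\times\is{M_k}$ acts on $\nd$ block-diagonally), and a routine check of the multiplication gives a semigroup isomorphism $U\simeq(S\pwr\is{M_1})\times\dots\times(S\pwr\is{M_k})$. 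Since every idempotent of $\wisd$ already lies in $U$, Green's $\rc$ on $U$ is the restriction of $\rc$ on $\wisd$ and the two have equally many $\rc$-classes; hence $R$, a subsemigroup of $U$ meeting every $\rc$-class with $\abs{R}=\abs{E(U)}$, is itself an $\rc$-cross-section of $U\simeq\prod_{i}(S\pwr\is{M_i})$. Next I would invoke that an $\rc$-cross-section of a finite direct product of inverse semigroups is the direct product of its coordinate projections, each of which is an $\rc$-cross-section of the corresponding factor; this yields $R\simeq R^{(1)}\times\dots\times R^{(k)}$ with $R^{(i)}$ an $\rc$-cross-section of $S\pwr\is{M_i}$ whose first-component semigroup is exactly $R(\overrightarrow{M_i})$. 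Finally, Lemma~\ref{lem:l4} applies to each $R^{(i)}$ and gives $R^{(i)}\simeq R_i\pwr R(\overrightarrow{M_i})$ for an $\rc$-cross-section $R_i$ of $S$, and reassembling the factors gives $R\simeq\prod_{i}\bigl(R_i\pwr R(\overrightarrow{M_i})\bigr)$.

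\textbf{The hard part.} The genuinely substantial step is the product decomposition: one must show that for an $\rc$-cross-section $T$ of a finite direct product $\prod_j H_j$ of inverse semigroups the coordinate projection $\pi_i$ does not identify two distinct $\rc$-classes on $T$. Once that is known, $\pi_i(T)$ is a subsemigroup meeting every $\rc$-class of $H_i$ with $\abs{\pi_i(T)}\le\abs{E(H_i)}$, hence an $\rc$-cross-section, and then $\abs{T}=\prod_j\abs{E(H_j)}=\prod_j\abs{\pi_j(T)}$ forces $T=\prod_j\pi_j(T)$. I expect the non-identification to be proved using that each $H_j=S\pwr\is{M_j}$ has a largest idempotent $\mathbf{1}_j$: for an $\rc$-class $C$ of $H_i$ with idempotent $e$, one looks at the element of $T$ lying in the $\rc$-class of $(\mathbf{1}_1,\ldots,e,\ldots,\mathbf{1}_k)$ and, multiplying an arbitrary $t\in T$ with $\pi_i(t)\in C$ by suitably chosen elements of $T$, forces $\pi_i(t)$ to equal the $i$-th coordinate of that element. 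An alternative that sidesteps this abstract lemma is to localise $R$ directly: after conjugating $R$ by the isomorphism $\Theta$ of the proof of Lemma~\ref{lem:l4} so that property~\eqref{1p} holds, for each $i$ the set of $(f,a)\in R$ with $a$ equal to the identity on $\bigcup_{j\ne i}M_j$ and $f(x)=\1$ for $x\notin M_i$ should be an $\rc$-cross-section of $S\pwr\is{M_i}$ with first component $R(\overrightarrow{M_i})$, to which Lemma~\ref{lem:l4} applies.
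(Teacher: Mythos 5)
Your proposal follows essentially the same route as the paper's proof: sufficiency by exhibiting, for each domain $A$ and each choice of idempotents of $S$, the unique $\rc$-related element of $R$ block by block; necessity via Lemma~\ref{lem:l1} to identify the $\isd$-component with some $R(\overrightarrow{M_1},\ldots,\overrightarrow{M_k})$, block-invariance to embed $R$ into $\prod_i\bigl(S\pwr\is{M_i}\bigr)$, reduction to a single block, and Lemma~\ref{lem:l4} together with the idempotent count to conclude. The one point where you genuinely diverge is the step you yourself single out as the hard part: you propose to invoke a general principle that an $\rc$-cross-section of a direct product of inverse monoids is the direct product of its coordinate projections, and you only sketch why a projection cannot identify two distinct $\rc$-classes. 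The paper neither states nor needs this abstract lemma; it instead uses that $R$ contains the local identities $(g_i,e_i)$ with $e_i=id_{M_i}$ and $g_i\equiv\1$, and that right multiplication by $(g_i,e_i)$ sends $(f,a)$ to $(f\bv{M_i},a\bv{M_i})$. Thus the $i$-th projection is realized by multiplication \emph{inside} $R$: its image $R(g_i,e_i)$ is a subsemigroup of $R$, and it meets each $\rc$-class of $S\pwr\is{M_i}$ exactly once because $R$ meets each $\rc$-class of $\wisd$ exactly once (the $\rc$-relation being intrinsic for inverse subsemigroups). This is exactly the ``alternative'' you mention at the end; I would promote it to the main argument, since the abstract direct-product lemma is the only unproven link in your chain, and proving it in the required generality would cost more than this one-line computation. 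Everything else in your write-up (the block-invariance of the generators $a_{i,j}$, the identification of $\rc$ on $U$ with the restriction of $\rc$ on $\wisd$, and the final cardinality count forcing $R$ to be the full direct product rather than a subdirect one) matches, and in places is more explicit than, the paper's own text.
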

\begin{proof}
 Let  $R_1,
\ldots, R_{k}$ be $\rc$-cross-sections of semigroup $S$. It is
obvious that $(R_1 \pwr R(\overrightarrow{M_1}) )\times (R_2 \pwr
R(\overrightarrow{M_2}) ) \times \ldots \times (R_{k}\pwr
R(\overrightarrow{M_k}) )$ is a semigroup.

Let $h=\fa{h}$ be an element of $\wisd$. Now show that there exists
only one element $g=\fa{g}\in R$ such that $h \rc g$. Define $g$ in
a following way. Put $a_g\bv{M_i}=b_i$, where $b_i \in
R_0(\overrightarrow{M_i})$, $\dom(b_i)=\dom(a _h)\cap M_i$.  For
every $x_i\in M_i\cap \dom(a_h)$ put $f_g(x_i)=y_i$, where $y_i\in
R_i$, $y_i\rc h(x_i)$. It follows from definition of $g$ that $g\rc
h$. It is clear that such an element $g$ is unique.

Now prove that every $\rc$-cross-section is obtained in this way.
Let $R$ be an $\rc$-cross-section of semigroup $\wisd$. According to
Lemma 1, the set $R_1=\{a| (f,a) \in R\}$ is an $\rc$-cross-section
of semigroup $\isd$, hence $R_1=R(\overrightarrow{M_1},
\overrightarrow{M_2},\ldots, \overrightarrow{M_s})$ for some
decomposition $M_1 \sqcup M_2 \sqcup \ldots \sqcup M_s$ of $\nd$.

Let $(g_i,e_i)\in R$ be such that $\dom(e_i)=M_i$, $g_i(x)\rc\1$ for
all $x \in M_i$.  Then analogously to Lemma 1, $e_i=id_{M_i},
g_i(x)=\1$. This element is the element of $\rc$-cross-section $R$.

As $M_i^R=M_i$ and $\nd=M_1\sqcup M_2 \sqcup \ldots \sqcup M_k$ we
have monomorphism from $R$ to $\prod_{i=1}^k(S \pwr \is{M_i})$
defined by $(f,a) \mapsto \left((f\bv{M_1}, a\bv{M_1}), \ldots,
(f\bv{M_k},a\bv{M_k})\right)$. Similarly to Lemma 1, multiplying by
$(g_i, e_i)$, we get that each component $R(g_i,e_i)$ of image of
$R$ is  an $\rc$-cross-section of $S \pwr \is{M_i}$. From Lemma 4 it
follows that every component is isomorphic to $R_i\pwr
R(\overrightarrow{M_i}) $ for some $\rc$-cross-section $R_i$ of
semigroup $S$. Now the statement of theorem is obvious.
\end{proof}

A map $a \mapsto a^{-1}$ is an anti-isomorphism of semigroup
$\wisd$, that sends $\rc$-classes to $\lc$-classes. It is also clear
that it maps $\rc$-cross-sections to $\lc$-cross-sections and
vice-versa. Hence dualizing Theorem \ref{thm:rcross}, one gets
description of $\lc$-cross-sections.

\begin{corollary}
Let $R(\overrightarrow{M_1}, \overrightarrow{M_2}, \ldots,
\overrightarrow{M_k}),R_1, \ldots, R_k$ be $\rc$-cross-sections of
semigroup $\isd$. Then $R=(R_1\pwr R(\overrightarrow{M_1}) )\times
(R_2 \pwr R(\overrightarrow{M_2}) ) \times \ldots \times (R_{k}\pwr
R(\overrightarrow{M_k}) )$ is an $\rc$-cross-section of semigroup
$\isd \pwr \isd$. Moreover, every $\rc$-cross-section is isomorphic
to $(R_1 \pwr R(\overrightarrow{M_1}) )\times (R_2 \pwr
R(\overrightarrow{M_2}) ) \times \ldots \times R_{k}\pwr
(R(\overrightarrow{M_k}) )$.
\end{corollary}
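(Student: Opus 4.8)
The plan is to obtain the Corollary from Theorem~\ref{thm:rcross} by two reductions: specialising the coefficient semigroup, and dualising. First I would set $S=\isd$ in Theorem~\ref{thm:rcross}. Since $\isd$ is a finite inverse semigroup the theorem applies verbatim, and it already delivers the statement: every $\rc$-cross-section of $\isd\pwr\isd$ is isomorphic to $(R_1\pwr R(\overrightarrow{M_1}))\times\cdots\times(R_k\pwr R(\overrightarrow{M_k}))$, where $R(\overrightarrow{M_1},\ldots,\overrightarrow{M_k})$ runs over the $\rc$-cross-sections of the ``top'' copy of $\isd$ and each $R_i$ over the $\rc$-cross-sections of the ``bottom'' copy $S=\isd$, and conversely every such product is an $\rc$-cross-section. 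By the description of $\rc$-cross-sections of $\isd$ recalled from~\cite{GM} above, each bottom block $R_i$ is itself built from a decomposition of $\nd$ with linear orders on its blocks, so the formula becomes completely explicit at both levels.

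For the $\lc$-version, I would use the anti-isomorphism $\iota\colon(f,a)\mapsto(f,a)^{-1}$ of $\isd\pwr\isd$ recorded just before the statement. As $\isd\pwr\isd$ is an inverse semigroup, $\iota$ is a bijection interchanging $\rc$- and $\lc$-classes, hence it carries the $\rc$-cross-sections of $\isd\pwr\isd$ bijectively onto its $\lc$-cross-sections; in particular the two families have the same cardinality. Thus every $\lc$-cross-section is $\iota(R)$ for a unique $\rc$-cross-section $R\simeq(R_1\pwr R(\overrightarrow{M_1}))\times\cdots\times(R_k\pwr R(\overrightarrow{M_k}))$, and to recast this in the same shape one invokes the fact that every inverse semigroup is anti-isomorphic to itself via $s\mapsto s^{-1}$ (so that an $\rc$-cross-section of $\isd$ corresponds to an $\rc$-cross-section of $\isd$ with the reversed internal linear orders, chains going to reversed chains), applied coordinate-wise, together with the fact that the opposite of a finite direct product is the direct product of the opposites.

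I expect the one point needing genuine care to be precisely this last identification: checking that passing to $a\mapsto a^{-1}$ is compatible with the partial-wreath construction, i.e.\ that the opposite semigroup $(R_i\pwr R(\overrightarrow{M_i}))^{\mathrm{op}}$ is again a partial wreath product of the same two-level type, so that the action $f\mapsto f^a$ and the product $(f,a)(g,b)=(fg^a,ab)$ transform as claimed. Everything else is routine: closure of the displayed product under multiplication is immediate, and existence and uniqueness of a representative in every $\rc$-class follows — exactly as in the proof of Theorem~\ref{thm:rcross}, via Proposition~\ref{green_2} and the partition $\nd=M_1\sqcup\cdots\sqcup M_k$ — so the only thing the Corollary adds beyond Theorem~\ref{thm:rcross} is the bookkeeping that makes the blocks $R_i$ explicit and the passage to the $\lc$-side.
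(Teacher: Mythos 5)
Your proposal is correct and matches the paper's (essentially unstated) argument: the corollary is exactly Theorem~\ref{thm:rcross} specialised to $S=\isd$, which is all the paper does, and your additional remarks on transporting the result to $\lc$-cross-sections via the anti-isomorphism $a\mapsto a^{-1}$ coincide with the paper's own one-line dualisation comment preceding the corollary. No gaps.
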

Starting from the last corollary and iterating
Theorem~\ref{thm:rcross}, one gets full description of
$\rc$-cross-sections of the semigroup $\pa T_n^k$ of partial
automorphisms of a rooted tree.
\begin{corollary}
Semigroup $\isd\pwr\isd$ contains $\sum\limits_{k=1}^n
\frac{(n!)^{n+1}}{k!}\binom{n-1}{k-1}\left(\sum\limits_{i=1}^n\frac{1}{i!}\binom{n-1}{i-1}\right)^k$
different $\rc$-($\lc$-) cross-sections.
\end{corollary}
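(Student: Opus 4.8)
The plan is to reopen the proofs of Theorem~\ref{thm:rcross} and Lemma~\ref{lem:l4} and extract from them an honest bijective parametrisation of the \emph{set} of $\rc$-cross-sections of $\isd\pwr\isd$ (not merely a list of isomorphism types), and then enumerate. Two combinatorial facts will be used repeatedly. First, by the theorem of \cite{GM} quoted above the $\rc$-cross-sections of $\isd$ are in bijection with the pairs consisting of a partition of $\nd$ into non-empty blocks together with a linear order on each block; the number of those with exactly $k$ blocks is the Lah number $L(n,k)=\frac{n!}{k!}\binom{n-1}{k-1}$ (linearly order the whole of $\nd$ in $n!$ ways, cut it into $k$ non-empty runs in $\binom{n-1}{k-1}$ ways, and divide by $k!$ since the $k$ blocks are distinct unordered sets), so $\isd$ has exactly $N_n:=\sum_{k=1}^{n}L(n,k)=n!\sum_{k=1}^{n}\frac1{k!}\binom{n-1}{k-1}$ different $\rc$-cross-sections. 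Second, the group of units of $\isd$ is $\mathcal S_n$, of order $n!$, and every $\rc$-cross-section of $\isd$ meets $\mathcal S_n$ only in $id_{\nd}$ (its unique element whose domain is $\nd$).

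Next I would observe that the proof of Theorem~\ref{thm:rcross} in fact yields the bijection
\[
\{\rc\text{-cross-sections of }\isd\pwr\isd\}\ \longleftrightarrow\ \bigsqcup_{(\overrightarrow{M_1},\dots,\overrightarrow{M_k})}\ \prod_{i=1}^{k}\Bigl\{\rc\text{-cross-sections of }\isd\pwr\is{M_i}\text{ with first component }R(\overrightarrow{M_i})\Bigr\},
\]
the union being over ordered partitions of $\nd$: the first component $R_1$ is recovered from $R$ by Lemma~\ref{lem:l1}, hence so is the decomposition $\nd=M_1\sqcup\dots\sqcup M_k$ with its linear orders; since every element of $R_1=R(\overrightarrow{M_1},\dots,\overrightarrow{M_k})$ preserves the blocks, $R$ is the internal direct product of its images $R^{(i)}$ in $\isd\pwr\is{M_i}$, each an $\rc$-cross-section with first component $R(\overrightarrow{M_i})$, and conversely any such family of $R^{(i)}$'s assembles back to an $\rc$-cross-section (each $R^{(i)}$ is a twist of some $R_2^{(i)}\pwr R(\overrightarrow{M_i})$ by Lemma~\ref{lem:l4}, the combined twist is an automorphism, and one applies Lemma~\ref{lem:l3} and Theorem~\ref{thm:rcross}). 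This reduces the problem to counting, for a block $M$ of size $m$ with a fixed order, the $\rc$-cross-sections of $\isd\pwr\is{M}$ whose first component is $R_1:=R(\overrightarrow{M})$.

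For this I would use the proof of Lemma~\ref{lem:l4}. It shows that every such cross-section is $\Theta_{\vec\varphi}(R_2\pwr R_1)$ for some $\rc$-cross-section $R_2$ of $\isd$ and some tuple $\vec\varphi=(\varphi_x)_{x\in M}$ of units of $\isd$, where $\Theta_{\vec\varphi}\colon (f,a)\mapsto(g,a)$, $g(x)=\varphi_x f(x)\varphi_{x^a}^{-1}$, is the twisting automorphism used there (and conversely every such subsemigroup is of this form, by Lemma~\ref{lem:l3}, since $\Theta_{\vec\varphi}$ fixes the first component). Because $\Theta_{\vec\varphi}\Theta_{\vec\psi}=\Theta_{\vec\varphi\vec\psi}$ (componentwise product), the tuples act on the pairs $(\vec\varphi,R_2)$, and one checks that $\Theta_{\vec\varphi}(R_2\pwr R_1)=R_2'\pwr R_1$ forces $\vec\varphi$ to be a constant tuple $(c,\dots,c)$ with $R_2'=cR_2c^{-1}$; the key point is that $R_2$ meets $\mathcal S_n$ only in $id_{\nd}$, while the chain elements of $R_1$ link every pair of points of $M$, so that the relation ``$\varphi_x=\varphi_{x^a}$ for all admissible $a,x$'' propagates to constancy. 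Hence $\mathcal S_n$ acts freely on $\mathcal S_n^{M}\times\{\rc\text{-cross-sections of }\isd\}$ by $c\cdot(\vec\varphi,R_2)=\bigl(\vec\varphi\cdot(c^{-1},\dots,c^{-1}),\,cR_2c^{-1}\bigr)$, and the cross-sections with first component $R_1$ are precisely the orbits; there are $(n!)^{m}N_n/n!=(n!)^{m-1}N_n$ of them.

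Finally I would assemble the count. For an ordered partition into $k$ blocks one has $\sum_{i=1}^{k}(|M_i|-1)=n-k$, so the product over blocks contributes $\prod_{i=1}^{k}\bigl((n!)^{|M_i|-1}N_n\bigr)=(n!)^{n-k}N_n^{k}$; summing over the $L(n,k)$ ordered partitions with $k$ blocks and then over $k$,
\[
\sum_{k=1}^{n}L(n,k)\,(n!)^{n-k}N_n^{k}=\sum_{k=1}^{n}\frac{n!}{k!}\binom{n-1}{k-1}(n!)^{n-k}(n!)^{k}\Bigl(\sum_{i=1}^{n}\tfrac1{i!}\binom{n-1}{i-1}\Bigr)^{k}=\sum_{k=1}^{n}\frac{(n!)^{n+1}}{k!}\binom{n-1}{k-1}\Bigl(\sum_{i=1}^{n}\tfrac1{i!}\binom{n-1}{i-1}\Bigr)^{k},
\]
which is the claimed number; the count of $\lc$-cross-sections is the same, via the anti-isomorphism $a\mapsto a^{-1}$. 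The main obstacle is the third step: promoting the ``isomorphic to'' of Theorem~\ref{thm:rcross} and Lemma~\ref{lem:l4} to an exact enumeration, i.e.\ verifying that the twisting automorphisms $\Theta_{\vec\varphi}$ are, up to the subsemigroups they produce, parametrised by $\mathcal S_n^{m}$ modulo the diagonal and act freely, so that each type $R_2\pwr R(\overrightarrow{M})$ is realised by exactly $(n!)^{|M|-1}$ distinct cross-sections with nothing double-counted.
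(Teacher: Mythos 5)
The paper offers no proof of this corollary at all --- it is stated as a bare consequence of Theorem~\ref{thm:rcross}, even though that theorem only classifies $\rc$-cross-sections \emph{up to isomorphism}, which by itself cannot yield an exact count. Your proposal therefore does not merely reprove the statement by another route; it supplies the missing argument, and it is correct. The three ingredients all check out: (i) the count $N_n=n!\sum_{i=1}^n\frac{1}{i!}\binom{n-1}{i-1}$ of $\rc$-cross-sections of $\isd$ via Lah numbers agrees with \cite{GM}; (ii) the reduction to independent blocks is exactly the internal-direct-product decomposition used in the proof of Theorem~\ref{thm:rcross} (the generators $a_{i,j}$ preserve blocks, and the idempotent count forces $R=\prod R^{(i)}$); (iii) the crucial new step --- that for a single ordered block $M$ of size $m$ the cross-sections with first component $R(\overrightarrow{M})$ are parametrised by pairs $(\vec\varphi,R_2)\in\mathcal S_n^{M}\times\{\rc\text{-cross-sections of }\isd\}$ modulo a free diagonal $\mathcal S_n$-action --- is sound, because the image of an element $(f,a)$ with $f\equiv\1$ under $\Theta_{\vec\varphi}$ has value $\varphi_x\varphi_{x^a}^{-1}$, a unit, which can lie in an $\rc$-cross-section of $\isd$ only if it equals $\1$, and the full chain $[m_1,\dots,m_m]\in R(\overrightarrow{M})$ then propagates $\varphi_{m_1}=\dots=\varphi_{m_m}$. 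This gives $(n!)^{m-1}N_n$ per block, hence $(n!)^{n-k}N_n^{k}$ per ordered partition with $k$ blocks, and the displayed sum. The only caveat is that steps (ii) and (iii) lean on the surjectivity assertions inside the paper's own proofs of Lemma~\ref{lem:l4} and Theorem~\ref{thm:rcross} (every cross-section with first component $R(\overrightarrow{M})$ really is some $\Theta_{\vec\varphi}(R_2\pwr R_1)$), which is legitimate here but means your enumeration is exactly as solid as those proofs; it would be worth stating explicitly that each fiber contains a unique representative with $\varphi_{m_1}=\1$, which is forced anyway since the unique element $(f,a)\in R$ with $\dom(a)=\{m_1\}$, $a(m_1)=m_1$, $f(m_1)\rc\1$ must be idempotent.
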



\begin{thebibliography}{10}
\bibitem{CoReilly} Cowan D. F.,  Reilly N. R.
\emph{Partial cross-sections of symmetric inverse semigroups}. Int.
J. Algebra Comput. 5 (1995), no.~3, pp.~259--287.
\bibitem{GM1}
Ganyushkin O., Mazorchuk V. \emph{The full finite inverse symmetric
semigroup $\mathcal{IS}_n$}. Preprint 2001:37, Chalmers University
of Technology and G\"{o}teborg University, G\"{o}teborg, 2001.
\bibitem{GM}
Ganyushkin O., Mazorchuk V. \emph{ $\lc$- and $\rc$-Cross-Sections
in $\isd$}. Communications in Algebra, vol.~31 (2003), no.~9,
pp.~4507--4523.
\bibitem{comb} Kochubinska Ye. \emph{Combinatorics of partial wreath power of finite inverse
symmetric semigroup $\mathcal{IS}_d$}. Algebra and Discrete
Mathematics, to appear.
\bibitem{Meldrum} Meldrum J.P.D. \emph{Wreath products of groups and semigroups}.
Pitman Monographs and Surveys in Pure and Applied Mathematics,
vol.~74. Harlow, Essex: Longman Group Ltd., 1995.


\end{thebibliography}
\end{document}